\newtheorem{theorem}{Theorem}
\newtheorem{lemma}{Lemma}
\newtheorem{proposition}{Proposition}
\newtheorem*{corollary*}{Corollary}
\theoremstyle{definition}
\date{\today}
\begin{document}

	\title{A $q$-weighted analogue of the Trollope-Delange formula}
	\author{Aleksei~Minabutdinov\thanks{
			Department of Mathematics (RiskLab) and Department of Management, Technology, and Economics, ETH Zurich, 8092 Z\"urich, Switzerland}}
	\date{\textit{Accepted for publication in \emph{Integers}}}
	\maketitle

	\vskip 20pt
	
	\centerline{\bf Dedication}
	\noindent
	This paper is dedicated to the memory of my Teacher, Professor Anatoly Vershik. His personality, mathematical brilliance, vast knowledge, versatility, optimism, and curiosity have been deeply inspiring to me and to all who surrounded him.

	\vskip 20pt
	\centerline{\bf Abstract}
	\noindent
	Let $s(n)$ denote the number of "$1$"s in the dyadic representation of a
	positive integer $n$ and sequence $S(n) = s(1)+s(2)+\dots+s(n-1)$.
	The Trollope-Delange formula is a classic result that represents the
	sequence $S$ in terms of the Takagi function. This work extends the result by introducing a $q$-weighted analog of $s(n)$, deriving a variant of the Trollope-Delange formula for this generalization. We show that for $1/2<|q|< 1$, nondifferentiable Takagi-Landsberg functions appear, whereas for $|q|>1$, the resulting functions are differentiable almost everywhere. We further show how the result can be used to find limiting curves describing fluctuations in the ergodic theorem for the dyadic odometer.
	\vskip 30pt
	
	\section{Introduction}
	Let $s(n)$ denote the number of "$1$"s in the binary (dyadic) representation of a positive integer $n$, and define the sequence  
	\[
	S(n) = \sum_{k=1}^{n-1} s(k).
	\]
	The Trollope-Delange formula, established by Trollope in 1968 \cite{Trollope}, is a fundamental result that relates the sequence $S$ to a continuous, 1-periodic, nowhere differentiable function $\tilde{F}_1$ as follows:
	\begin{equation}\label{eq:TrollopeDelange}
		\frac{1}{n}S(n) = \frac{1}{2}\log_2 n + \frac{1}{2}\tilde{F}_1(\log_2n).
	\end{equation}
	The Takagi function $\mathcal{T}:\mathbb{R}\rightarrow [0,+\infty)$ is defined by
	\[
	\mathcal{T}(x) = \sum_{n=0}^\infty \left(\frac{1}{2}\right)^n \tau(2^n x),
	\]
	where $\tau(x) = \text{dist}(x, \mathbb{Z})$ is the distance from $x \in \mathbb{R}$ to the nearest integer. It is well known (see, e.g.,  \cite{Girgensohn2012}) that $\tilde{F}_1$ can be represented through $\mathcal{T}$ as
	\begin{equation}
		\label{eq:TD_function}
		\tilde{F}_1(t) = 1-t - 2^{1-t}\mathcal{T}(2^{-(1-t)}) \quad \text{for } 0 \leq t \leq 1.
	\end{equation}

	Delange generalized this result to number systems with base $m \geq 3$ in 1975 paper~\cite{Delange}. Various other extensions followed, involving exponential, power, and binomial sums (see \cite{Kruppel}, \cite{Girgensohn2012}, \cite{AllaartKawamura2012}).
	
	A further generalization considers weighted digital sums with real-valued weights $\gamma = (\gamma_0, \gamma_1, \dots)$. For $n$ written in binary representation as $n = \omega_0 + \omega_1 2 + \omega_2 2^2 + \dots$, define $s(n, \gamma)$ by $s(n, \gamma) = \gamma_0 \omega_0 + \gamma_1 \omega_1 + \gamma_2 \omega_2 + \dots$. For $\gamma = (1,1,1,\dots)$, the definition reduces to the standard sum-of-digits function: $s(n, (1)) = s(n)$.
	In 2005, an asymptotic version of the Trollope-Delange formula was derived for $S(n, \gamma) = \sum_{k=0}^{n-1}s(k, \gamma)$. Let $[ \,\cdot \,]$ denote the integer part.
	Authors of \cite{Larcher-et-al-2005} showed that there exists a continuous 1-periodic function $\tilde{G}_{\gamma}:\mathbb{R}\rightarrow \mathbb{R}$ such that
	\begin{equation}\label{eq:Larcher}
		S(n,\gamma) = \frac{n}{2}\sum_{i=0}^{[ \log_2n ]}\gamma_i + n\tilde{G}_\gamma(\log_2n) + o(n),
	\end{equation}
	\textit{if and only if} $\lim\limits_{i \to \infty} \gamma_i = \tilde{\gamma}$ exists. 
	Moreover, the $o(n)$-term is zero if and only if $\gamma_i \equiv \tilde{\gamma},$ and the function $\tilde{G}_{\gamma}$ on $[0,1]$ can be represented by
	\[
	\tilde{G}_{\gamma}(x) = -\frac{\tilde{\gamma}}{2}\sum_{i=-1}^\infty\frac{\tau(2^{x+i})}{2^{x+i}}.
	\]
	Unlike the exact formula \eqref{eq:TrollopeDelange}, the asymptotic expression \eqref{eq:Larcher} includes an unknown term $o(n)$. 
	If $\lim_{i \to \infty} \gamma_i = 0$, then $\tilde{G}_{\gamma} \equiv 0$, and $S(n,\gamma) - \frac{n}{2}\sum_{i=0}^{[ \log_2n ]}\gamma_i$ is only described as this remainder term $o(n)$. If $\tilde{\gamma} = \infty$, $\tilde{G}_{\gamma}$  is not defined. Expression~\eqref{eq:Larcher} was later generalized to bases $m>2$ and other moments in \cite{Hofer-et-al-2008}.  Special cases like $\gamma_n = (-1)^n, n\geq 0,$ were studied in \cite{Kruppel2008}, yielding exact expressions for $S(\,\cdot\,, (-1)^n)$. Another case, $\gamma_n = (1/2)^n$, was studied in \cite{Larcher-et-al-2005} with connection to the van der Corput sequence.

	This work focuses on the  $q$-weighted case $\gamma_n:= q^{n+1}$ for a real $q$.  Let $k \in \mathbb{N}_0$ be given by its dyadic expansion $k = \omega_0 + \omega_1 2 + \omega_2 2^2 + \dots$ with digits $\omega_i \in \{0,1\}$. We denote $s(k, (q^{i+1})_i) = \sum_{i \geq 0} \omega_i q^{i+1}$ as $s_q(k)$ and define $S_q(n) = S(n, (q^n)) \equiv \sum_{k=0}^{n-1} s_q(k)$.  In Section~\ref{Sec:MainResult}, we show that for $|q| > \frac{1}{2}$, an exact generalized Trollope-Delange-type formula holds, involving periodic Takagi-Landsberg functions.
	If $1/2 < |q| < 1$, our main result, Theorem \ref{Th:mainTheorem}, refines expression \eqref{eq:Larcher}, providing an explicit formula for the \( o(n) \) term in the case of \( q \)-weights.
	If $|q|>1$, the result seems to be uncovered in the literature. Interestingly, in this case, almost everywhere differentiable functions\footnote{This contrasts, for example, with the Trollope case, where the Takagi function appears. More precisely, we show that nondifferentiable Takagi-Landsberg functions arise for $1/2<|q| \leq 1$.} arise.  In Section \ref{Sec:complex_q}, we consider complex-valued $q$. Section \ref{Sec:LimitingCurves} further shows how our results relate to dynamical systems and a description of limiting curves for the sequence~$s_q(n)$.

	\subsection{Takagi-Landsberg Functions}
	\label{Sec:TakagiLandsberg}
	
	Let $a$ be a real parameter such that $|a|<1$. The Takagi-Landsberg function $\mathcal{T}_a:\mathbb{R}\rightarrow [0,+\infty)$ is defined by 
	\begin{equation}
		\mathcal{T}_a(x) = \sum_{n=0}^\infty a^n \tau(2^n x),
		\label{eq:TakagiLandsberg}
	\end{equation}
	where, as before, $\tau(x) = \,\mathrm{dist}(x, \mathbb{Z})$ is the distance from $x \in \mathbb{R}$ to the nearest integer. It is easy to see that the series \eqref{eq:TakagiLandsberg} converges uniformly\footnote{In several works, see for example \cite{HataYamaguti}, more general sums of the form $\sum_{n=0}^\infty c_n \tau(2^n x)$ were considered where $\sum_{n=0}^\infty |c_n|<\infty$. Such a class of functions is called the Takagi class.}, and thus defines a continuous function $\mathcal{T}_a$ for values $|a|<1$. 
	The family of 1-periodic functions $\{\mathcal{T}_a\}_a$ proposed by \cite{Landsberg} can be considered a direct generalization of the well-known Takagi function $\mathcal{T}$, introduced by T. Takagi \cite{Takagi1903}, which is obtained when $a$ is set equal to $1/2$. For $|a|\geq\frac{1}{2}$, the functions $\mathcal{T}_a$ are  nowhere differentiable, but for $|a|<\frac{1}{2}$, they are differentiable almost everywhere\footnote{In particular, $\mathcal{T}_{1/4}(x) = x(1-x)$.}, this follows from \cite{Kono1987}, see also  the surveys \cite{AllaartKawamura2012} and \cite{Lagarias}. The functions $\mathcal{T}_a$ have been studied in several works, see \cite{Allaart11, Boros, TaborTabor2009}.
	
	\begin{figure}[t]
		\centering
		\subfigure[$a=-1/2$ (alternating sign Takagi curve)]{\includegraphics[width=0.45\linewidth]{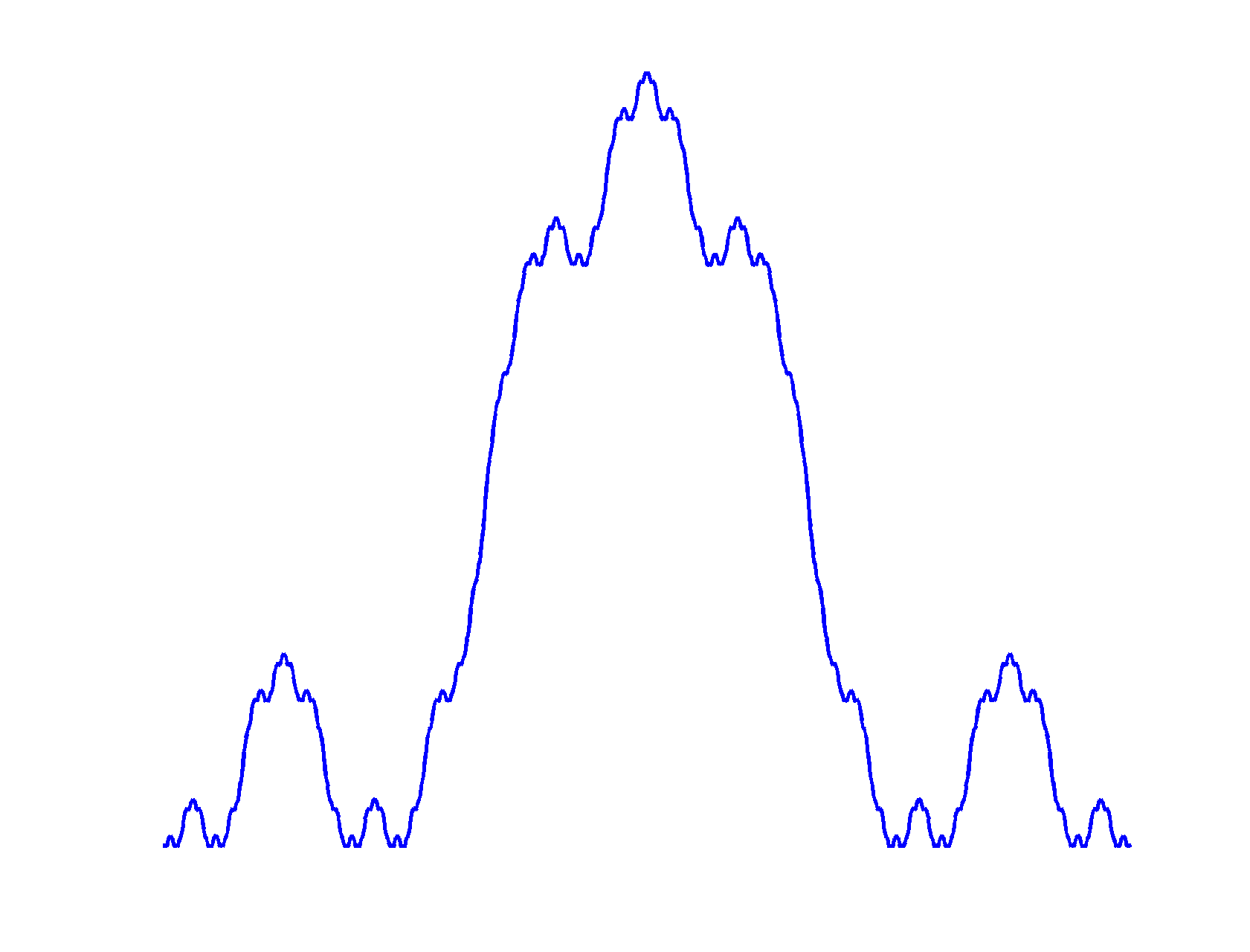}}
		\subfigure[$a=1/2$ (Takagi-Blancmange curve)]{\includegraphics[width=0.45\linewidth]{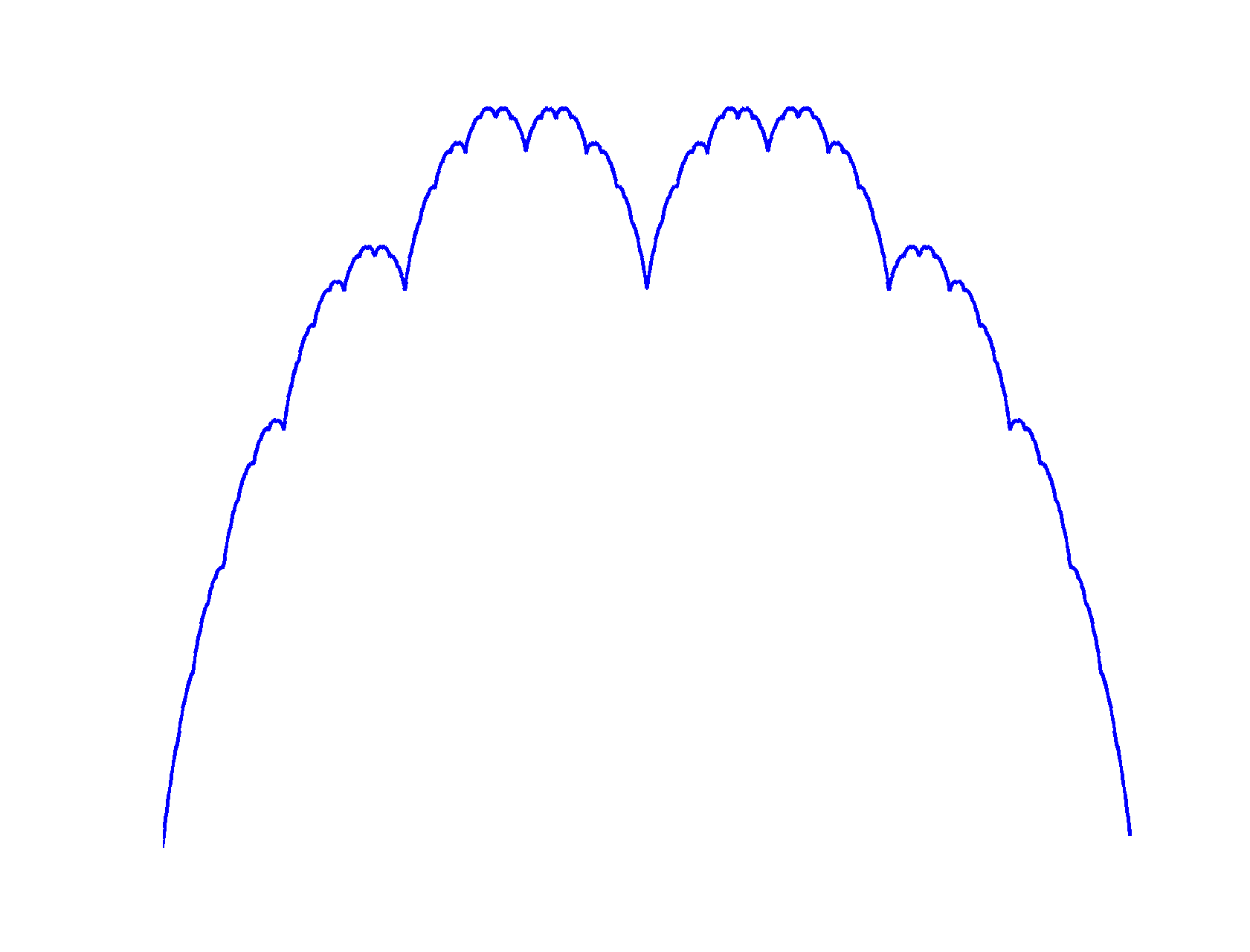}}
		\subfigure[$a=2/3$]{\includegraphics[width=0.45\linewidth]{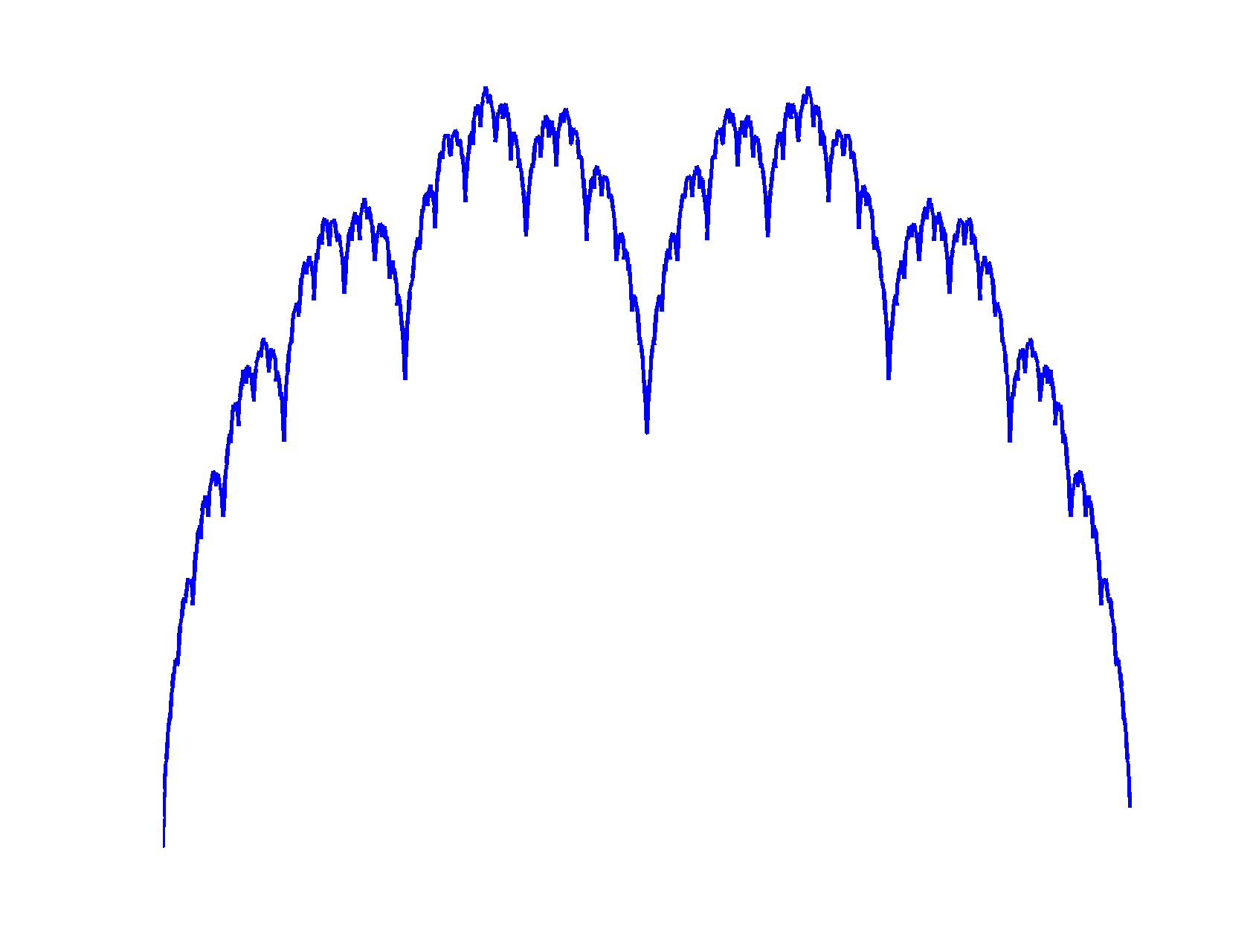}}
		\subfigure[$a=1/4$ (parabola)]{\includegraphics[width=0.45\linewidth]{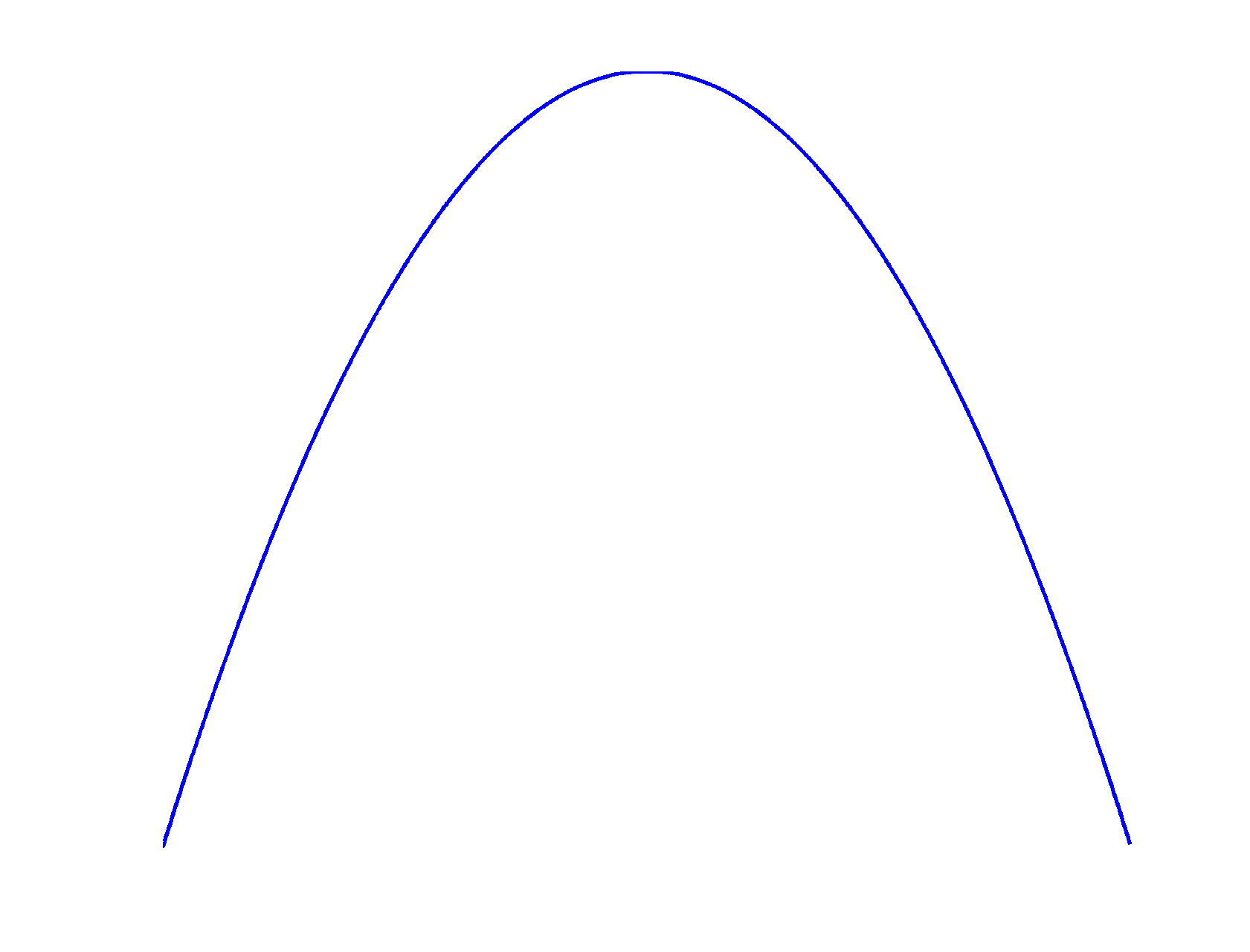}}
		\caption{Takagi-Landsberg curves for different values of the parameter $a$.}
		\label{fig:Odometer}
	\end{figure}
	
	From definition \eqref{eq:TakagiLandsberg}, it follows that the function $\mathcal{T}_a$ on the interval $[0,1]$ satisfies the following de Rham functional equations:
	\begin{equation}
		\begin{cases}
			\mathcal{T}_a(x/2) =  a \mathcal{T}_a(x) + x/2, \\
			\mathcal{T}_a(\frac{x+1}{2}) =  a \mathcal{T}_a(x) + \frac{1-x}{2}.
		\end{cases}
		\label{eq:deRhamForTa}
	\end{equation}
	Such a system of equations uniquely determines the function on dyadic rationals (i.e., \( x \) of the form \( x = \frac{n}{2^k} \) with \( n\in \mathbb{Z}_+ \) and \( k \in \mathbb{N} \); for dyadic rational points we canonically choose the dyadic representation ending in all zeros\footnote{Under this convention, at any dyadic rational $x$, series \eqref{eq:TakagiLandsberg} is finite irrespective of $a$.}). The function can be continuously extended to the whole interval $[0,1]$ if a contraction argument applies.  More precisely, using Banach's fixed-point theorem, it was shown in \cite{Barnsley} (see also \cite{Girgensohn1994} and \cite{Girgensohn2012}) that any system of functional equations of the form
	\begin{equation}
		\begin{cases}
			f(x/2) =  a_0 f(x) + g_0(x), \\
			f(\frac{x+1}{2}) =  a_1 f(x) + g_1(x),
		\end{cases}
		\label{eq:coherence}
	\end{equation}
	provided that $\max\{|a_0|, |a_1|\} < 1$ and the consistency condition\footnote{This condition ensures that the equations remain consistent at  $x=1/2.$}
	\begin{equation}
		\label{eq:consistency_cond}
		a_0\frac{g_1(1)}{1-a_1} + g_0(1) = a_1\frac{g_0(0)}{1-a_0} + g_1(0),
	\end{equation}
	uniquely defines a continuous function on the interval $[0,1]$.
	

	\section{Main Result}
	\label{Sec:MainResult}
	
	Let \( s_q(n), q \in \mathbb{R}, \) be the \( q \)-weighted sum of digits in the binary representation of a positive integer \( n = \sum_{i \geq 0} \omega_i 2^{i} \), equal to \( \sum_{i \geq 0} \omega_i q^{i+1} \), and let \( S_q(n) \) denote \( \sum_{j=1}^{n-1} s_q(j) \). For \( q=1 \), we have \( S_1(n) = S(n) \), and  the classical Trollope-Delange formula~\eqref{eq:TrollopeDelange} holds. Our result generalizes this to the \( q \)-weighted sums:
	
	\begin{theorem}
		\label{Th:mainTheorem}
		Let \( |q| > 1/2 \) and \( q \neq 1 \). Set \( a = 1/(2q) \). For any \( n \in \mathbb{N} \), the following expression holds:
		\begin{equation}
			\label{eq:generalizedTrollopeDelange}
			\frac{1}{n} S_q(n) = \frac{q}{2} \left( \frac{1-q^{[\log_2n]+1}}{1-q} - q^{[\log_2n]} \hat{F}_q(\log_2n) \right),
		\end{equation}
		where the 1-periodic function \( \hat{F}_q \) is given by
		\[
		\hat{F}_q(u) =  2^{1-u} \mathcal{T}_a(2^{-(1-u)}), \quad u \in [0,1].
		\]
	\end{theorem}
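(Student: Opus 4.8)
The plan is to derive Theorem~\ref{Th:mainTheorem} from a single \emph{exact} formula for $S_q$ on the dyadic blocks $[0,2^{k}]$ and then to unfold it using the $1$-periodicity of $\hat F_q$. Concretely, I will first prove, by induction on $k$, that for every integer $k\ge 0$ and every integer $r$ with $0\le r\le 2^{k}$,
\begin{equation}\label{eq:plan-block}
S_q(r)=\frac{q}{2}\,\frac{1-q^{k}}{1-q}\,r-2^{k-1}q^{k}\,\mathcal{T}_a\!\left(\frac{r}{2^{k}}\right).
\end{equation}
Granting \eqref{eq:plan-block}, the theorem follows by taking $k=[\log_2 n]+1$ and $r=n$ (so $0\le n<2^{k}$), dividing by $n$, and using $n/2^{k}=2^{-(1-\{\log_2 n\})}$ together with the periodicity relation $\hat F_q(\{\log_2 n\})=\hat F_q(\log_2 n)$ to rewrite the second term as $\tfrac{q}{2}q^{[\log_2 n]}\hat F_q(\log_2 n)$, while the first term is exactly $\tfrac{q}{2}\cdot\tfrac{1-q^{[\log_2 n]+1}}{1-q}$. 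Here the hypothesis $q\ne 1$ enters only when dividing by $1-q$, and $|q|>1/2$ enters precisely to give $|a|=1/(2|q|)<1$, which is what guarantees that $\mathcal{T}_a$ is a well-defined continuous function obeying the de Rham equations \eqref{eq:deRhamForTa}.

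The induction rests on one combinatorial identity: for $i<2^{k}$ the integer $2^{k}+i$ differs from $i$ only by the digit in position $k$, of weight $q^{k+1}$, whence $s_q(2^{k}+i)=q^{k+1}+s_q(i)$ and therefore
\[
S_q(2^{k}+r')=S_q(2^{k})+r'q^{k+1}+S_q(r')\qquad(0\le r'\le 2^{k},\ k\ge 0).
\]
The base case $k=0$ of \eqref{eq:plan-block} is immediate, since then $\tfrac{1-q^{0}}{1-q}=0$ and $\mathcal{T}_a(r)=0$ for $r\in\{0,1\}$, so both sides vanish. For the step from $k$ to $k+1$, fix $r$ with $0\le r\le 2^{k+1}$. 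If $r\le 2^{k}$, write $r/2^{k+1}=(r/2^{k})/2$ and apply the first de Rham equation of \eqref{eq:deRhamForTa} with $x=r/2^{k}\in[0,1]$; because $a=1/(2q)$ gives $q^{k+1}a=q^{k}/2$, the level-$(k+1)$ right-hand side of \eqref{eq:plan-block} collapses onto the level-$k$ right-hand side (the polynomial terms recombining through $\tfrac{1-q^{k+1}}{1-q}-q^{k}=\tfrac{1-q^{k}}{1-q}$), so the inductive hypothesis closes this case, without even invoking the combinatorial identity. If $2^{k}<r\le 2^{k+1}$, write $r=2^{k}+r'$ with $0<r'\le 2^{k}$, apply the combinatorial identity, insert the level-$k$ hypothesis for both $S_q(2^{k})$ (the right endpoint, where $\mathcal{T}_a(1)=0$ makes it $2^{k-1}q\,\tfrac{1-q^{k}}{1-q}$) and $S_q(r')$, and use the second de Rham equation of \eqref{eq:deRhamForTa} with $x=r'/2^{k}$; once more $q^{k+1}a=q^{k}/2$ makes the $\mathcal{T}_a$-terms cancel and the remaining terms, linear and constant in $r'$, match by the same geometric-series identity.

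These manipulations are routine once the bookkeeping is fixed; the one design choice worth stressing is to state \eqref{eq:plan-block} on the \emph{closed} block $[0,2^{k}]$, since the second case of the inductive step needs the value $S_q(2^{k})$ already supplied by the level-$k$ statement at its right endpoint. I expect that to be the only subtle point: everything else reduces to the single identity $q^{k+1}a=q^{k}/2$ and the collapse of finite geometric series. (One could instead characterize $\hat F_q$ as the unique continuous solution of a system of type \eqref{eq:coherence} and match constants via the consistency condition \eqref{eq:consistency_cond}, but the direct induction above looks shorter and more transparent.)
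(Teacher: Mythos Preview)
Your proof is correct and takes a genuinely different route from the paper. The paper follows Girgensohn's functional-equation method: it introduces the auxiliary quantity $G_q(n)=\frac{1}{p_nr_n}\bigl(S_q(n)-\frac{n}{p_n}S_q(p_n)\bigr)$, derives the recurrences $G_q(2n)=G_q(n)$, $G_q(n+p_n)=aG_q(n)+\frac{p_n-n}{4p_n}(3-2q)$, $G_q(n+2p_n)=aG_q(n)+\frac{n}{4p_n}(2q-1)$, passes via Lemma~\ref{lemma:Girgensohn} to a function $F_q$ on dyadic rationals satisfying a de Rham system of type~\eqref{eq:coherence}, invokes the Banach fixed-point theorem to get a unique continuous solution, and then recognises that solution as $F_q(x)=qx-\tfrac12\mathcal{T}_a(x)$. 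Your argument, by contrast, \emph{starts} from the target function $\mathcal{T}_a$ and its known de Rham equations~\eqref{eq:deRhamForTa}, and verifies the closed block formula~\eqref{eq:plan-block} by a clean induction on $k$ using only the digit identity $s_q(2^k+i)=q^{k+1}+s_q(i)$. The trade-off is the usual one: the paper's approach is discovery-oriented (it shows how the de Rham system, and hence $\mathcal{T}_a$, is forced), whereas your approach is shorter and entirely elementary but presupposes the correct ansatz. Your closing remark that one ``could instead characterise $\hat F_q$ as the unique continuous solution of a system of type~\eqref{eq:coherence}'' is in fact exactly the path the paper takes.
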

	
	To prove \eqref{eq:generalizedTrollopeDelange}, several approaches can be used. Our approach follows Girgensohn~\cite{Girgensohn2012}. The idea is to find the functional equations satisfied by the sequence \( S_q(n) \), which leads to the function \( \hat{F}_q \).
	
	\begin{proof}
		For \( k \in \mathbb{N} \), set \( p = 2^{k-1} \). Note that
		\begin{align}
			\label{eq:s(2j)}
			&s_q(2j) = q \, s_q(j),\\
			\label{eq:s(2j+1)}
			&s_q(2j+1) = q \, s_q(j) + q,\\
			\label{eq:s(j+p)}
			&s_q(j+p) = s_q(j) + q^k, \quad j = 0, 1, \dots, p-1,\\
			&s_q(j+p) = s_q(j) - q^k(1-q), \quad j = p, p+1, \dots, 2p-1.
		\end{align}
		
		Let \( [x] \) denote the integer part of \( x \), and \( \{x\} \) its fractional part. Set \( k_n = [\log_2n] \, \) and \( u_n = \{\log_2n\} \). Following the approach in \cite{Girgensohn2012}, denote\footnote{We try to stick to the short notation $p_n$ whenever it is sensible, i.e., unless we do not need to consider expressions like $p(n+p(n))$, etc.} by \( p_n = p(n) = 2^{k_n} \) the largest power to which 2 must be raised to get a number not exceeding \( n \), and by \( r_n \) denote \( q^{\log_2(p_n)} \equiv q^{k_n} \).
		
		It is easy to see that
		\begin{align}
			&p(2n) = 2p(n),\\
			&p(n+p(n)) = 2p(n),\\
			&p(n+2p(n)) = 2p(n).
		\end{align}
		
		One can verify that for any \( n \in \mathbb{N} \), the following relations hold\footnote{For illustration, here we obtain \eqref{eq:Sn+2p}. Expressions \eqref{eq:Sn2+p}-\eqref{eq:S2n} can be obtained analogously. After subtracting and adding the same quantity $S_q(2p_n)$ to $S_q(n+2p_n)$, we get
			\[S_q(n+2p_n)= s_q(2p_n+1) + s_q(2p_n+2)+\dots+s_q(2p_n+n)+ S_q(2p_n).\]
			Using \eqref{eq:s(j+p)} with $p=2 p_n = 2^{k_n+1}$  the latter writes as
			$s_q(1) + s_q(2)+\dots+s_q(n)+nq^{k_n+2}+S_q(2p_n)$.}:
		\begin{align}
			&S_q(n+2p_n) = S_q(n) + S_q(2p_n) + nq^{k_n+2}, \label{eq:Sn+2p}\\
			&S_q(n+p_n) = S_q(n) + (2q-1) S_q(p_n) - (n-p_n) q^{k_n+1} (1-q) + q p_n, \label{eq:Sn2+p}\\
			&S_q(2n) = 2q S_q(n) + nq. \label{eq:S2n}
		\end{align}

		The last of these relations implies that at points of the form \( p = 2^k \), the value of the function \( S_q \) is
		\begin{equation}
			\label{eq:SqAtP}
			S_q(p) = q \frac{1-q^k}{1-q} 2^{k-1} = q \frac{1-q^k}{1-q} \frac{p}{2}.
		\end{equation}
		
		Define the function \( G_q: \mathbb{N} \rightarrow \mathbb{R} \) by
		\begin{equation}
			G_q(n) = \frac{1}{p_n r_n} \left( S_q(n) - \frac{n}{p_n} S_q(p_n) \right).
			\label{eq:G_q_definition}
		\end{equation}
		
		\begin{figure}[t]
			\centering
			\includegraphics[scale=0.45]{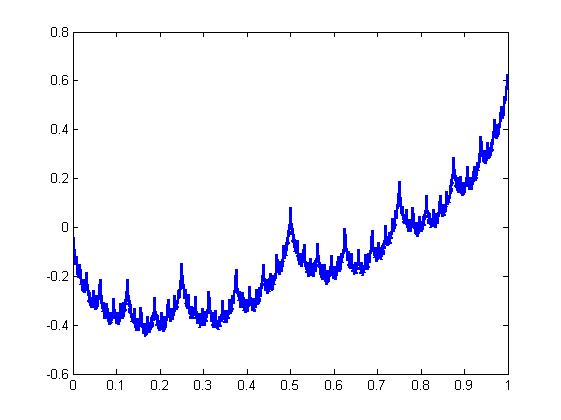}
			\caption{Graph of the function \( F_{2/3} \).}
			\label{fig:Odometer}
		\end{figure}
		
		The function \( G_q(n) \) satisfies the following relations:
		\begin{align}
			&G_q(2n) = G_q(n), \label{eq:G(2n)}\\
			&G_q(n+p_n) = \frac{1}{2q} G_q(n) + \frac{p_n-n}{4p_n} (3-2q), \label{eq:G(n+pn)}\\
			&G_q(n+2p_n) = \frac{1}{2q} G_q(n) + \frac{n}{4p_n} (2q-1). \label{eq:G(n+2pn)}
		\end{align}
		
		Relation \eqref{eq:G(2n)} follows directly from \eqref{eq:S2n}. We derive relation \eqref{eq:G(n+pn)}. For brevity, here we will write \( p \) for \( p_n = p(n) \), $p(n+p)$ instead of $p(n+p(n))$, \( r \) for \( r_n \), and $k$ for~$k_n$.
		\[
		\begin{split}
			&G_q(n+p) = \frac{1}{2qpr} \left( S_q(n+p) - \frac{n+p}{p(n+p)} S_q(p(n+p)) \right) \\
			&= \frac{1}{2qpr} \left( S_q(n) + (2q-1) S_q(p) - (n-p) q^{k+1} (1-q) + pq - \frac{n+p}{2p} (2q S_q(p) + pq) \right) \\
			&= \frac{1}{2q} G_q(n) + \frac{1}{2qpr} \left( \frac{p-n}{2} q + (1-q) \left( \left( \frac{n}{p} - 1 \right) S_q(p) - (n-p) q^{k+1} \right) \right) \\
			&= \frac{1}{2q} G_q(n) + (3-2q) \frac{p-n}{4p}.
		\end{split}
		\]
		Relation \eqref{eq:G(n+2pn)} can be derived as follows:
		\[
		\begin{split}
			&G_q(n+2p) = \frac{1}{2qpr} \left( S_q(n+2p) - \frac{n+2p}{2p} S_q(p(n+p)) \right) \\
			&= \frac{1}{2qpr} \left( S_q(n) - \frac{n}{p} S_q(p) - \frac{n}{2} q + q^{k+2} \right) \\
			&= \frac{1}{2q} G_q(n) + \frac{n}{2qpr} \left( (1-q) \frac{S_q(p)}{p} - \frac{q}{2} + q^{k+2} \right) \\
			&= \frac{1}{2q} G_q(n) + (2q-1) \frac{n}{4p}.
		\end{split}
		\]
		
		Denote by \( x_n = x(n) = 2^{u_n} - 1 = \frac{n-p_n}{p_n} \in [0,1) \). The following lemma, proved by Girgensohn in \cite{Girgensohn2012}, holds:
		
		\begin{lemma}
			\label{lemma:Girgensohn}
			Let \( G: \mathbb{N} \rightarrow \mathbb{R} \) be a function. For \( n \in \mathbb{N} \), set \( x = \frac{n-p_n}{p_n} \in [0,1) \) and define the function \( F \) by
			\[ F(x) = F \left( \frac{n-p_n}{p_n} \right) := G(n). \]
			The function \( F \) is well-defined in dyadic-rational points from the interval \([0,1)\) if and only if \( G(2n) = G(n) \) for all \( n \in \mathbb{N}. \)
		\end{lemma}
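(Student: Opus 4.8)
The plan is to reduce the claim to an explicit description of the fibres of the map $n \mapsto x_n = (n - p_n)/p_n$. First I would record the elementary binary picture: writing $n = p_n + m$ with $p_n = 2^{k_n}$ the largest power of two not exceeding $n$, one has $0 \le m < p_n$ and $x_n = m/2^{k_n}$, so $x_n$ is a dyadic rational in $[0,1)$; conversely every dyadic rational $j/2^{\ell} \in [0,1)$ arises, e.g.\ from $n = 2^{\ell} + j$. Hence the image of $n \mapsto x_n$ is exactly the set of dyadic rationals in $[0,1)$, and the assertion that ``$F$ is well-defined at dyadic-rational points'' is literally the statement that $x_n = x_m$ implies $G(n) = G(m)$.

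Next I would pin down the fibre over a fixed dyadic rational $x$. Writing $x = j/2^{\ell}$ in lowest terms (so $j$ is odd, or $j = 0$, $\ell = 0$), the constraints $x_n = m/2^{k_n}$ and $0 \le m < 2^{k_n}$ force $k_n \ge \ell$ and $m = j\,2^{k_n - \ell}$, so the fibre is precisely $\{\, 2^{k-\ell}(2^{\ell}+j) : k \ge \ell \,\} = \{\, n_0, 2 n_0, 4 n_0, \dots \,\}$ with minimal element $n_0 = 2^{\ell} + j$. In other words, the fibres of $n \mapsto x_n$ are exactly the forward orbits of the doubling map $n \mapsto 2n$ on $\mathbb{N}$; this is consistent with the identity $p(2n) = 2p(n)$ already noted, which immediately yields $x_{2n} = x_n$.

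With this in hand both implications are short. If $G(2n) = G(n)$ for all $n$, then $G$ is constant along every doubling orbit, hence constant on every fibre, so $F$ is a genuine function on the dyadic rationals of $[0,1)$. Conversely, if $F$ is well-defined, then using $x_{2n} = x_n$ we get $G(2n) = F(x_{2n}) = F(x_n) = G(n)$ for every $n$. I do not anticipate a real obstacle here; the only place that needs a little care is the bookkeeping in the fibre computation --- checking that the inequality $0 \le m < p_n$ genuinely forces $k_n \ge \ell$, so that no preimages with denominator smaller than the reduced one sneak in, and separately handling the degenerate value $x = 0$, whose fibre is the set of all powers of two. Everything else follows directly from the definitions of $p_n$ and $x_n$.
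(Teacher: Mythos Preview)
Your argument is correct. The paper does not actually include a proof of this lemma; it simply attributes the result to Girgensohn \cite{Girgensohn2012} and quotes it. Your explicit identification of the fibres of $n\mapsto x_n$ with the doubling orbits $\{n_0,2n_0,4n_0,\dots\}$ is exactly what is needed, and both implications follow from it as you describe; the only delicate points (the inequality forcing $k_n\ge\ell$, and the degenerate fibre over $x=0$) are ones you have already flagged.
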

		
		The identities
		\begin{align}
			&\frac{x(n)}{2} = \frac{n-p_n}{2p_n} = x(n+p_n), \\
			&\frac{x(n)+1}{2} = \frac{n}{2p_n} = x(n+2p_n)
		\end{align}
		also hold. By Lemma \ref{lemma:Girgensohn}, the function \( F_q \), given by \( F_q(x_n) = G_q(n) \), is well-defined in dyadic-rational points from \([0,1)\). Using that $a = 1/(2q)$ identities \eqref{eq:G(n+pn)}--\eqref{eq:G(n+2pn)} can be rewritten as follows for \( x = x_n \):
		\begin{equation}
			\begin{cases}
				F_q(x/2) = a F_q(x) + (2q-3) \frac{x}{4}, \\
				F_q(\frac{x+1}{2}) = a F_q(x) + (2q-1) \frac{x+1}{4}.
			\end{cases}
			\label{eq:FbydeRham}
		\end{equation}
		System~\eqref{eq:FbydeRham} satisfies consistency condition~\eqref{eq:consistency_cond} and defines \( F_q \) on dyadic rationals. The contraction condition \( |a| = |1/(2q)| < 1 \) allows us to extend it to the whole interval \( [0,1] \).  
		The function \( qx - \frac{1}{2} \mathcal{T}_a(x) \) also satisfies de Rham equations~\eqref{eq:FbydeRham} and thus coincides with \( F_q \):  
		\begin{equation}  
			\label{eq:solutionOfdeRhamSystem}  
			F_q(x) = qx - \frac{1}{2} \mathcal{T}_a(x) = q \left( \frac{1+x}{2} - \mathcal{T}_a \left( \frac{x+1}{2} \right) \right).  
		\end{equation}

		Using the identity \( S_q(n) = r_n p_n G_q(n) + \frac{n}{p_n} S_q(p_n) \), we arrive at
		\begin{equation}
			\label{eq:generalizedTrollopeDelange2a}
			\frac{1}{n} S_q(n) = q^{k_n} \frac{F_q(x_n)}{x_n+1} + \frac{q}{2} \frac{1-q^{k_n}}{1-q}.
		\end{equation}
		For brevity we will write  \( k \) for \( k_n  \).
		Let \( x = 2^u - 1 \) and, accordingly, \( \frac{1}{x+1} = 2^{-u} \). We can write \( \frac{F_q(x)}{x+1} = q \left( \frac{1}{2} - 2^{-u} \mathcal{T}_a(2^{u-1}) \right) \) and represent \eqref{eq:generalizedTrollopeDelange2a} as
		\[
		\frac{1}{n} S_q(n) = \frac{q}{2} \frac{1-q^k}{1-q} + q^{k+1} \left( \frac{1}{2} - 2^{-u} \mathcal{T}_a(2^{u-1}) \right) = \frac{q}{2} \left( \frac{1-q^k}{1-q} + q^k \left( 1 - 2^{1-u} \mathcal{T}_a(2^{u-1}) \right) \right),
		\]
		which leads to the desired formula:
		\begin{equation*}
			\label{eq:generalizedTrollopeDelange2b}
			\frac{1}{n} S_q(n) = \frac{q}{2} \left( \frac{1-q^{[\log_2n]+1}}{1-q} - q^{[\log_2n]} 2^{1-u} \mathcal{T}_a(2^{u-1})  \right).
		\end{equation*}
	\end{proof}
	
	\emph{Remark} 1. System \eqref{eq:FbydeRham} lets one define  \( F_q \) in the dyadic rational points regardless of the value of $a$. This results\footnote{ It is  straightforward to check that $\mathcal{T}_a(n/(2p_n)) = \frac{1}{a^{k_n+1}} \sum_{i=1}^{k_n+1} a^i \tau(n/2^i). $ Similarly, the expression on the right-hand side makes sense for any $a$. Then, we can use \eqref{eq:generalizedTrollopeDelange2a} to obtain \eqref{eq:1/nS_at_Dyadic_points}.}

	\begin{equation}
		\frac{1}{n} S_q(n) = \frac{q}{2} \frac{1-q^{k_n+1}}{1-q} -     \frac{1}{2n} \sum_{i=1}^{k_n+1} (2q)^i \tau(n/2^i). 
		\label{eq:1/nS_at_Dyadic_points}
	\end{equation}
	However, the condition \( |q| > 1/2 \) in Theorem \ref{Th:mainTheorem}  cannot be omitted:  Banach's fixed-point theorem, which relied on the contraction principle, cannot be applied to  \eqref{eq:FbydeRham} without this condition. It is then an expanding mapping instead, which results in the absence of continuous solutions on the interval $[0,1]$.

	\emph{Remark} 2. The special case of formula \eqref{eq:generalizedTrollopeDelange} for \( q = -1 \) was obtained by Kr\"uppel in 2008, see Theorem 5.1 in \cite{Kruppel2008}. In it, the graph of the function \( \mathcal{T}_{-1/2} \) was called the alternating sign Takagi curve. 
	
	\emph{Remark} 3.  The case of \( q = 1/2 \) was studied in connection with the discrepancy of the van der Corput sequence. In this case, it was shown, see \cite{Larcher-et-al-2005}, Theorem~$4,$  that
	\[
	\frac{1}{n} S_{1/2}(n) = \frac{1 - D^*_n}{2} = \frac{1}{2} \left( 1 - \frac{1}{n} \left( 1 + \sum_{j=1}^{k_n} \tau \left( \frac{n}{2^j} \right) \right) \right),
	\]
	where \( D^*_n \) denotes the star discrepancy of the van der Corput sequence. The expression on the right-hand side can be obtained as a special case of \eqref{eq:1/nS_at_Dyadic_points}.

	\begin{corollary*}
		Let \( q > 1/2 \) and \( q \neq 1 \) and $a = 1/(2q)$. The following generalized Trollope-Delange formula holds:
		\begin{equation*}
			\label{eq:generalizedTrollopeDelange2}
			\frac{1}{n} S_q(n) = \frac{q}{2} \left( \frac{1-q^{\log_2n}}{1-q} + q^{\log_2n} \tilde{F}_q(\log_2n) \right),
		\end{equation*}
		where the 1-periodic function \( \tilde{F}_q \) is given by
		\begin{equation}
			\label{eq:F_tilde}
			\tilde{F}_q(u) = \frac{1-q^{1-u}}{1-q} - q^{-u} 2^{1-u} \mathcal{T}_a(2^{-(1-u)}), \quad u \in [0,1].
		\end{equation}
	\end{corollary*}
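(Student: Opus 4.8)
The plan is to obtain the corollary as a direct reformulation of Theorem~\ref{Th:mainTheorem}, using that we now assume $q>1/2$, hence $q>0$, so that $q^{t}$ is a well-defined positive real for \emph{every} real exponent $t$; this is precisely the point that fails for negative (or complex) $q$, which is why the corollary is restricted to $q>1/2$. Write $k=k_n=[\log_2 n]$ and $u=u_n=\{\log_2 n\}\in[0,1)$, so $\log_2 n=k+u$ and
\[
q^{k}=q^{\log_2 n}\,q^{-u},\qquad q^{k+1}=q^{\log_2 n}\,q^{\,1-u}.
\]
Since $\hat F_q$ is $1$-periodic, $\hat F_q(\log_2 n)=\hat F_q(u)$, and \eqref{eq:generalizedTrollopeDelange} becomes
\[
\frac{1}{n}S_q(n)=\frac{q}{2}\left(\frac{1-q^{\log_2 n}q^{\,1-u}}{1-q}-q^{\log_2 n}q^{-u}\hat F_q(u)\right).
\]

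Next I would split off the main term: from $1-q^{\log_2 n}q^{\,1-u}=(1-q^{\log_2 n})+q^{\log_2 n}(1-q^{\,1-u})$ one gets
\[
\frac{1}{n}S_q(n)=\frac{q}{2}\left(\frac{1-q^{\log_2 n}}{1-q}+q^{\log_2 n}\Bigl(\tfrac{1-q^{\,1-u}}{1-q}-q^{-u}\hat F_q(u)\Bigr)\right).
\]
The quantity in the inner parentheses depends on $n$ only through $u=\{\log_2 n\}$, so setting $\tilde F_q(u):=\frac{1-q^{1-u}}{1-q}-q^{-u}\hat F_q(u)$ and substituting $\hat F_q(u)=2^{1-u}\mathcal{T}_a(2^{-(1-u)})$ reproduces \eqref{eq:F_tilde} and yields the asserted identity.

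It remains to confirm that $\tilde F_q$, a priori only defined on $[0,1]$, extends to a $1$-periodic function, i.e. $\tilde F_q(0)=\tilde F_q(1)$. From \eqref{eq:TakagiLandsberg} one reads off $\mathcal{T}_a(1/2)=1/2$ and $\mathcal{T}_a(1)=0$, hence $\hat F_q(0)=2\cdot\tfrac12=1$ and $\hat F_q(1)=1\cdot 0=0$, so $\tilde F_q(0)=1-1\cdot 1=0$ and $\tilde F_q(1)=\tfrac{1-1}{1-q}-q^{-1}\cdot 0=0$. Thus $\tilde F_q(0)=\tilde F_q(1)=0$, the periodic extension is well defined, and it is in fact continuous since $\mathcal{T}_a$ is. No step is a genuine obstacle; the proof is pure bookkeeping, the only subtlety being the hypothesis $q>1/2$ that makes the rewriting $q^{[\log_2 n]}=q^{\log_2 n}q^{-\{\log_2 n\}}$ meaningful — for other $q$ one must keep the integer-part form of Theorem~\ref{Th:mainTheorem}.
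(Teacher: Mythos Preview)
Your proof is correct and follows essentially the same route as the paper: both start from Theorem~\ref{Th:mainTheorem}, use $q^{[\log_2 n]}=q^{\log_2 n}q^{-u}$ with $u=\{\log_2 n\}$, and regroup terms to isolate the $1$-periodic factor $\tilde F_q$. You add the explicit check $\tilde F_q(0)=\tilde F_q(1)=0$ for the periodic extension and the remark on why $q>1/2$ (rather than $|q|>1/2$) is needed; the paper omits these but they are welcome clarifications.
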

	\begin{proof}
		
		We use the representation derived in Theorem~1,
		\[
		\frac{1}{n} S_q(n) = \frac{q}{2} \left( \frac{1-q^k}{1-q} + q^k \left( 1 - 2^{1-u} \mathcal{T}_a(2^{u-1}) \right) \right),
		\]
		where, as above, $u = \{\log_2n\}$ and  the fact that \( q^k = q^{\log_2n} q^{-u} \),  to obtain the desired formula:
		\begin{equation*}
			\label{eq:generalizedTrollopeDelange2b}
			\frac{1}{n} S_q(n) = \frac{q}{2} \left( \frac{1-q^{\log_2n}}{1-q} + q^{\log_2n} \left( \frac{1-q^{1-u}}{1-q} - 2^{1-u} q^{-u} \mathcal{T}_a(2^{u-1}) \right) \right).
		\end{equation*}
	\end{proof}
	
	Figure \ref{fig:F_tilde} further illustrates graphs of the "generalized Trollope-Delange" function $\tilde{F}_q$ for different values of the weight parameter $q;$ in case of $q=1$ we use expression~\eqref{eq:TD_function}, which can also be obtained by going to the limit in \eqref{eq:F_tilde}. It follows from Section \ref{Sec:TakagiLandsberg} that for $q>1$ ($a<1/2$), functions $\tilde{F}_q$ are almost everywhere differentiable; and also $\tilde{F}_2\equiv 0.$
	\begin{figure}[h!]
		\centering
		\includegraphics[scale=0.38]{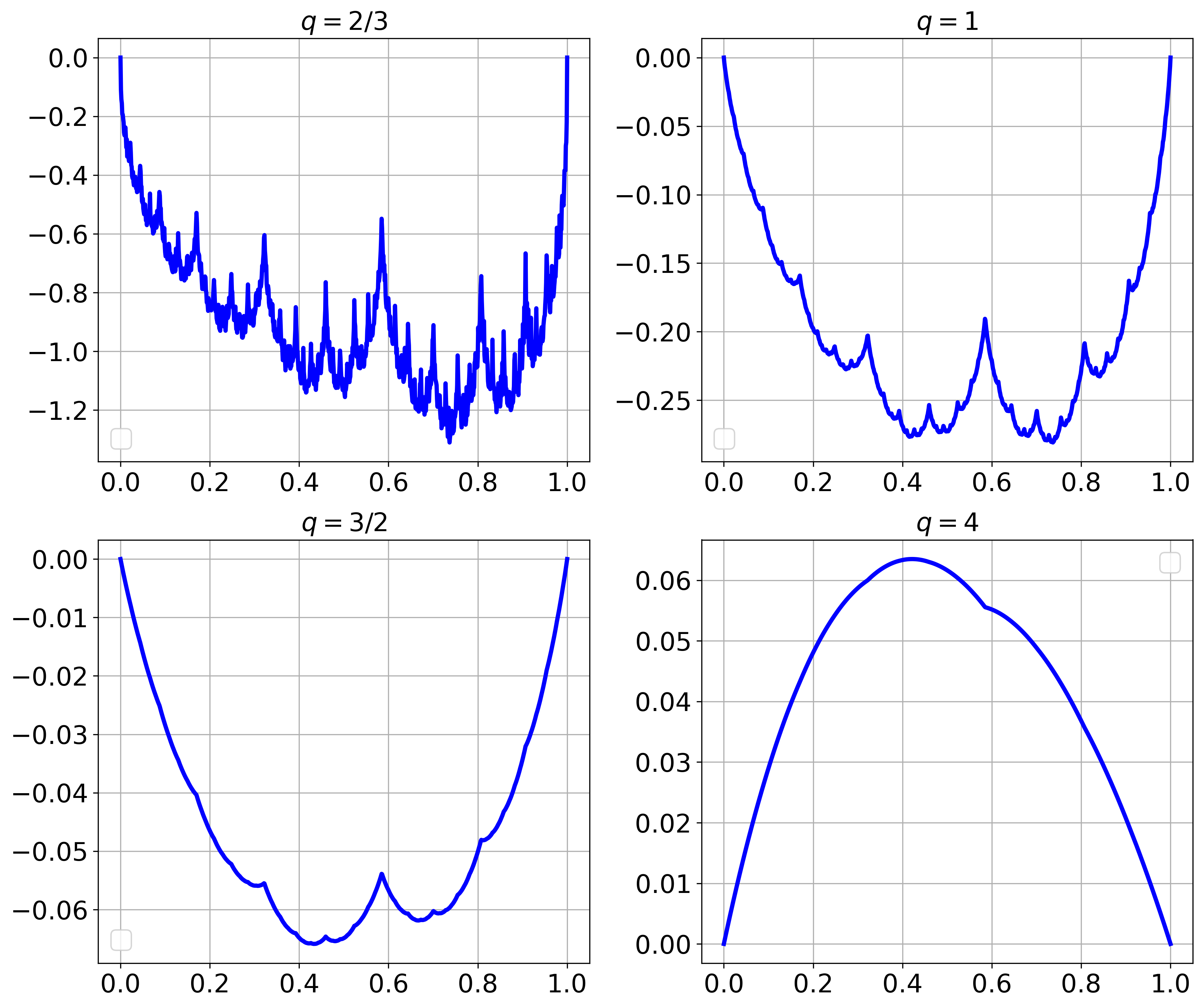}
		\caption{Graphs of the function \( \tilde{F}_{q},\, q \in  \{2/3, 1, 3/2, 4\} \).}
		\label{fig:F_tilde}
	\end{figure}

	\section{Extension to Complex Weights}
	\label{Sec:complex_q}
	The proofs of Theorem~\ref{Th:mainTheorem} extend almost verbatim to the complex setting. Specifically, if 
	\[
	q\in\mathbb{C}\quad \text{with}\quad |q|>1/2,
	\]
	so that, with 
	\[
	a=\frac{1}{2q},
	\]
	we have \(|a|<1\), then the \(q\)-weighted sum-of-digits function
	\[
	s_q(n)=\sum_{i\ge0}\omega_i\,q^{\,i+1},
	\]
	and its cumulative sum
	\[
	S_q(n)=\sum_{k=0}^{n-1}s_q(k),
	\]
	are well defined (with complex values), and the generalized Trollope-Delange formula
	\[
	\frac{1}{n}S_q(n)=\frac{q}{2}\left(\frac{1-q^{\lfloor\log_2n\rfloor+1}}{1-q}-q^{\lfloor\log_2n\rfloor}\,\hat{F}_q(\log_2n)\right),
	\]
	with
	\[
	\hat{F}_q(u)=2^{1-u}\,\mathcal{T}_a\Bigl(2^{-(1-u)}\Bigr),\quad a=\frac{1}{2q},
	\]
	remains valid. Since the sequence \( (|2a|^n)_{n\geq 0} \) belongs to \(\ell^2\) when \(|a| < 1/2\), it follows from \cite{Kono1987} that for such values of parameter $a$ the function \( \mathcal{T}_a \) is almost everywhere differentiable.

	Figure~\ref{fig:ComplexTA} illustrates the nontrivial behavior in this extended setting.  We plot the complex-valued Takagi-Landsberg function $\mathcal{T}_a(x)$ for several choices of $q$:
	\[
	q=i,\quad q=\frac{1}{2}+\frac{i}{2},\quad \text{and}\quad q=\frac{1}{2}-\frac{i}{2}.
	\]
	
	
	\begin{figure}[t]
		\centering
		\includegraphics[width=1.0\linewidth]{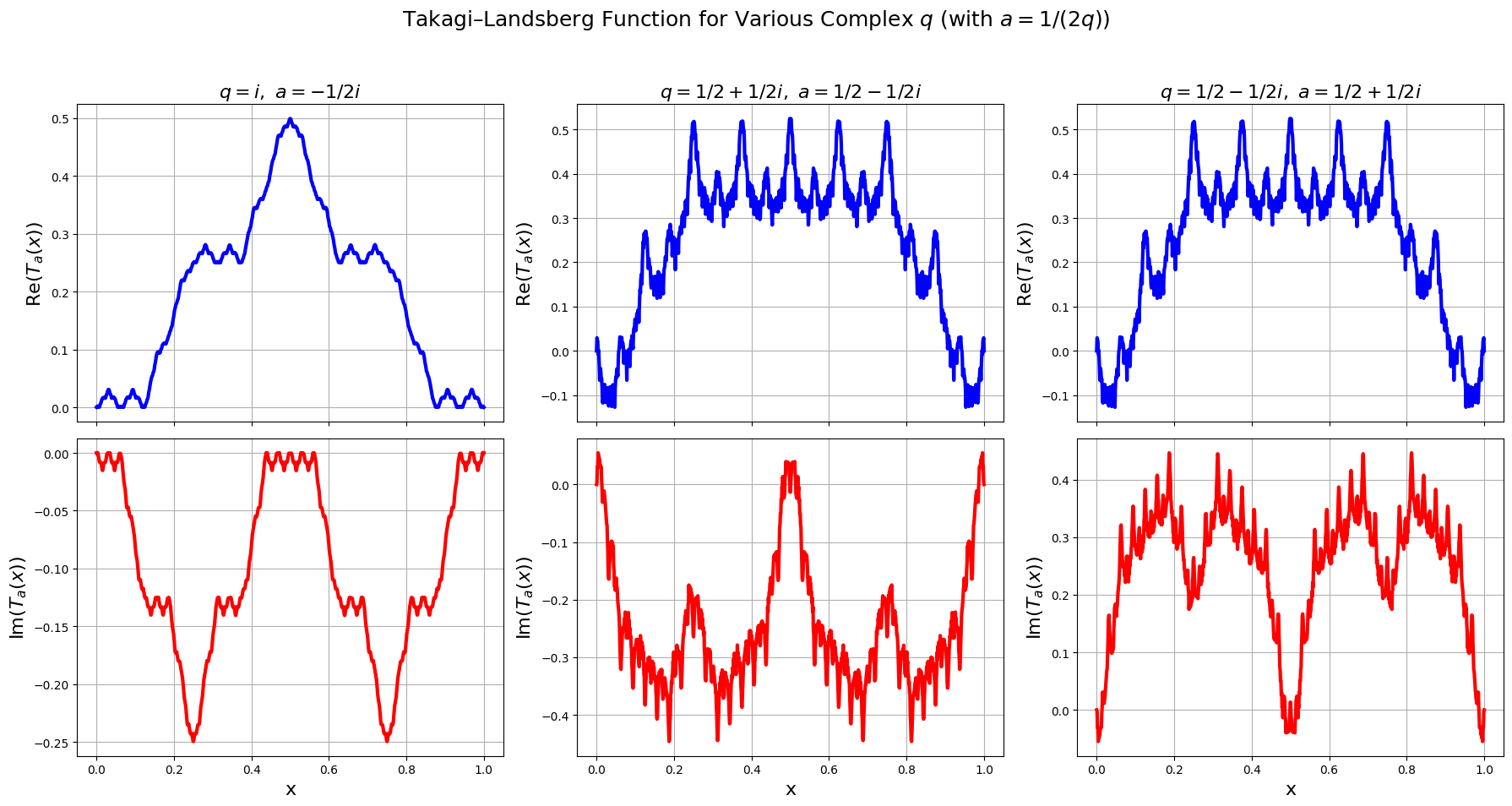}
		\caption{Takagi--Landsberg curves for complex values of \(a\).}
		\label{fig:ComplexTA}
	\end{figure}

	\section{The Trollope-Delange Formula and the Limiting Curve of the Sequence $s_q(n)$}
	\label{Sec:LimitingCurves}
	Let $(a_n)_{n \geq 0}$ be a real sequence. Extend the partial sums $S(n) = \sum_{j=0}^{n-1} a_j$ to real-valued arguments using linear interpolation and consider the sequence of continuous functions $\varphi_n: [0,1] \rightarrow [-1,1]$ defined by the equation $\varphi_n(t) = \frac{S(t \cdot n) - t \cdot S(n)}{R_n}$, where the normalization factor $R_n$ is chosen to be $\max_{t \in [0,1]} |S(t \cdot n) - t \cdot S(n)|$, if this maximum is not zero, otherwise, $R_n$ is set to one. Note that due to the subtracted term $t\cdot S(n)$, the functions $\varphi_n$ take zero values at the endpoints of the interval $[0,1]$.
	
	If there exists a sequence of positive integers $(l_n)$ such that the sequence of functions $\varphi_{l_n}$ converges to a (continuous) function $\varphi$ in the $\sup$-metric on $[0,1]$, then the graph of the limiting function $\varphi$ is called a \emph{limiting curve of the sequence} $a_n$, the sequence $l_n$ is called a \emph{stabilizing sequence}, and the sequence\footnote{What matters here is the growth rate of \( R_{l_n} \).} $R_{l_n}$ is called a \emph{normalizing sequence}. The authors of \cite{DeLaRue}  proposed this definition to describe fluctuations of ergodic sums of certain functions while studying dynamical systems, particularly the Pascal adic transformation. This definition was also considered in the works of the author \cite{Min17}, \cite{Min19}; see also the references therein. The idea is to consider the limit of the normalized difference of the partial sum and the linear function along the $X$ and $Y$ axes.
	
	Consider the initial sequence $a_n$ to be the sequence $s_q(n)$. 
	Using the Trollope-Delange formula, we can demonstrate the following proposition, which has also been proven by an alternative method in \cite{Min19}.
	
	\begin{proposition}
		\label{Prop:MainProp2}
		Let \( q \in \mathbb{C} \) with \( |q| > \frac{1}{2} \) (in particular, this includes the real case), and set  $a = 1/(2q)$. Choose any $N \in \mathbb{N}$ and fix $l = 2^N$, and $R = (2q)^{N-1}$. Consider the dyadic-rational points \( t_j = \frac{j}{2^{N-1}} \) for \( j = 0, 1, 2, \dots, 2^{N-1} \), and define 
		\[
		\varphi_l(t_j) = \frac{S_q(t_j \cdot n) - t_j \cdot S_q(n)}{R}.
		\]
		Then, the following equality holds
		\begin{equation}
			\varphi_l(t_j) = -q \mathcal{T}_a(t_j).
			\label{eq:varphi_l}
		\end{equation}
	\end{proposition}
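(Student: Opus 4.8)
The plan is to reduce \eqref{eq:varphi_l} to an identity between $S_q$ and $\mathcal{T}_a$ evaluated at dyadic rationals, where both sides are finite sums. Since $l=2^N$, the argument $t_j\,l = j\cdot 2^{N}/2^{N-1}=2j$ is an integer for every $j$, and \eqref{eq:SqAtP} gives $S_q(l)=S_q(2^N)=\tfrac{q}{2}\cdot\tfrac{1-q^N}{1-q}\cdot 2^N$, so $t_j\,S_q(l)=qj\,\tfrac{1-q^N}{1-q}$ (the case $q=1$ is recovered by letting $q\to 1$). Thus \eqref{eq:varphi_l} becomes the claim
\[
S_q(2j)-qj\,\frac{1-q^N}{1-q}=-(2q)^{N-1}\,q\,\mathcal{T}_a\!\Bigl(\frac{j}{2^{N-1}}\Bigr),\qquad j=0,1,\dots,2^{N-1}.
\]
I would first dispose of $j=0$ and $j=2^{N-1}$, where both sides vanish (using $\mathcal{T}_a(0)=\mathcal{T}_a(1)=0$ and $S_q(2j)=t_j S_q(l)$), and then fix $1\le j\le 2^{N-1}-1$ and set $k=[\log_2 j]\le N-2$.

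I would evaluate the left side via the dyadic-point formula \eqref{eq:1/nS_at_Dyadic_points} taken at $n=2j$, for which $k_{2j}=k+1$; using $2j/2^i=j/2^{i-1}$ and $\tau(j)=0$ to drop the $i=1$ summand, reindexing yields $S_q(2j)=qj\,\tfrac{1-q^{k+2}}{1-q}-q\sum_{i=1}^{k+1}(2q)^i\,\tau(j/2^i)$. For the right side I would expand $\mathcal{T}_a(j/2^{N-1})=\sum_{m\ge 0}a^m\tau(2^m j/2^{N-1})$, note that every term with $m\ge N-1$ has integer argument, and reindex $i=N-1-m$ (using $a^{-1}=2q$) to get $\mathcal{T}_a(j/2^{N-1})=a^{N-1}\sum_{i=1}^{N-1}(2q)^i\tau(j/2^i)$. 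Splitting this at $i=k+1$ and noting $j/2^i<1/2$, hence $\tau(j/2^i)=j/2^i$, for all $i\ge k+2$, the tail is a geometric series equal to $j\,(q^{k+2}-q^N)/(1-q)$.

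To conclude, I would substitute $q\sum_{i=1}^{k+1}(2q)^i\tau(j/2^i)=qj\,\tfrac{1-q^{k+2}}{1-q}-S_q(2j)$ and the geometric tail into $-q\,\mathcal{T}_a(j/2^{N-1})=-q\,a^{N-1}\bigl(\sum_{i=1}^{k+1}(2q)^i\tau(j/2^i)+j(q^{k+2}-q^N)/(1-q)\bigr)$; the $q^{k+2}$-terms cancel, leaving $a^{N-1}\bigl(S_q(2j)-qj\,\tfrac{1-q^N}{1-q}\bigr)$, and multiplying by $(2q)^{N-1}=a^{-(N-1)}$ gives the displayed identity, hence \eqref{eq:varphi_l}. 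A possibly cleaner alternative is to first check, via \eqref{eq:S2n} and \eqref{eq:SqAtP}, that $S_q(2j)-t_jS_q(2^N)=2q\bigl(S_q(j)-\tfrac{j}{2^{N-1}}S_q(2^{N-1})\bigr)$, which recasts the claim as a self-similar recursion in $N$ closable by induction together with the de Rham equations \eqref{eq:deRhamForTa} for $\mathcal{T}_a$. The one delicate point is the index bookkeeping — that $k_{2j}=k_j+1$, that the $i=1$ term produced at $n=2j$ is precisely the vanishing $\tau(j)$, and that the cutoff $i\le N-1$ in the expansion of $\mathcal{T}_a$ is matched to the cutoff $i\le k+1$ in \eqref{eq:1/nS_at_Dyadic_points} exactly by the geometric tail; once the two finite sums are written in the same form, the identity is pure cancellation.
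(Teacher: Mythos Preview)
Your argument is correct and takes a genuinely different route from the paper's. The paper proceeds structurally: it partitions the index range $[1,l)$ into dyadic blocks $I_m=[l/2^m,l/2^{m-1})$ and on each block rewrites $\tilde G_q(n)=\varphi_l(n/l)$ in terms of the function $G_q(n)$ introduced in the proof of Theorem~\ref{Th:mainTheorem}, then invokes the representation $F_q(x)=qx-\tfrac12\mathcal{T}_a(x)$ together with iterated versions of \eqref{eq:S2n} and \eqref{eq:deRhamForTa} to close the computation on every $I_m$. You instead bypass the interval partition and the auxiliary functions $G_q,F_q$ entirely: starting from the dyadic-point identity \eqref{eq:1/nS_at_Dyadic_points} (Remark~1) at $n=2j$ and the finite expansion of $\mathcal{T}_a$ at $j/2^{N-1}$, you write both sides as finite sums $\sum_i (2q)^i\tau(j/2^i)$, match the common range $1\le i\le k+1$, and evaluate the geometric tail $k+2\le i\le N-1$ explicitly. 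Your path is shorter and more transparent once \eqref{eq:1/nS_at_Dyadic_points} is in hand; the paper's path does not rely on that remark and makes the self-similar structure (the de~Rham recursion) more visible, which is closer in spirit to the alternative induction you sketch at the end. The bookkeeping you flag (the shift $k_{2j}=k_j+1$, the vanishing $\tau(j)$ at $i=1$, and the matching of the two cutoffs via the geometric tail) is exactly the content of the computation and is handled correctly.
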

	
	\begin{proof}
		
		\begin{figure}[h!]
			\centering
			\includegraphics[scale=0.95]{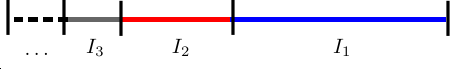}
			\caption{Partitioning of the interval $I = [1, l)$.}
			\label{fig:Intervals}
		\end{figure}
		
		Define the function $\tilde{G}_q(n)$ on $I$ by
		\[
		\tilde{G}_q(n) = \frac{1}{R} \left( S_q(n) - \frac{n}{2p} S_q(2p) \right) = \varphi_l \left( \frac{n}{l} \right).
		\]
		Note that $r,p$ (and thus $ rp = R$) are set here \textit{not} to depend on $n$, which contrasts to \eqref{eq:G_q_definition}. 
		
		Let $l_m = l / 2^m$. Partition the interval $I = [1, l)$ into $N$ non-overlapping subintervals $(I_m)_{m=1}^N$, $I_m = [l_m, l_{m-1})$, so that $I = \cup_{m=1}^N I_m$, see Figure~\ref{fig:Intervals}. To prove the relation \eqref{eq:varphi_l} on the entire $I$, we sequentially iterate through each of the intervals $I_m, m = 1, 2, \dots, N$ and demonstrate the validity of the relation \eqref{eq:varphi_l}. Below, we verify the expression \eqref{eq:varphi_l} first on $I_1$, and then on an arbitrary $I_m, m \geq 1$.
		
		On the interval $I_1$ (i.e., for $\frac{l}{2} \leq n < l$), the quantities $p = p_n$, $r = r_n$, and $x = x_n$, defined in the proof of Theorem 1, can also be given by simpler relations: $p = l/2 = 2^{N-1}$, $r = q^{N-1}$, and $\frac{x+1}{2} = \frac{n}{l}$. 
		Using the relations \eqref{eq:Sn+2p}--\eqref{eq:Sn2+p}, \eqref{eq:SqAtP}, and \eqref{eq:solutionOfdeRhamSystem}, we rewrite the expression for $\tilde{G}_q(n), n \in I_1,$ as follows:
		
		
		\begin{align*}
			\tilde{G}_q(n) &= \frac{1}{pr} \left( S_q(n) - \frac{nq}{2p} (2S_q(p) + p) \right) \\
			&= \frac{1}{pr} \left( S_q(n) - \frac{n}{p} S_q(p) + (1-q) \frac{n}{p} S_q(p) - q \frac{n}{2} \right) \\
			&= G_q(n) - \frac{nq}{2pr} + (1-q) \frac{n}{p} \frac{S_q(p)}{pr} \\
			&= G_q(n) - \frac{qn}{2pr} + \frac{nq (1-q^{N-1})}{2pr} \\
			&= G_q(n) - \frac{qn}{2p} = F_q(x) - q \frac{x+1}{2} =-q \mathcal{T}_a \left( \frac{n}{l} \right).
		\end{align*}
		
		Proceed to the interval $I_m$, defined by the inequalities $l_m \leq n < l_{m-1}$, where $m = 1, 2, \dots, N$. Set $N_m = N-m,$ $r_m = q^{m-1},$ $t = t_n = \frac{n}{l}$. From the relations \eqref{eq:S2n} and \eqref{eq:deRhamForTa}, it is easy to obtain that
		\begin{equation*}
			S_q(l) = (2q)^m S_q(l_m) + 2^{m-1} l_m q (1+q+\dots+q^{m-1})
			\label{eq:Sl}
		\end{equation*}
		and
		\begin{equation*}
			\mathcal{T}_a(t) = (2q)^m \mathcal{T}_a \left( \frac{t}{2^m} \right) - t q (1+q+\dots+q^{m-1}).
			\label{eq:Tl}
		\end{equation*}
		
		Then, adding and subtracting $\frac{n}{l_m} S_q(l_m)$ we can show that on the interval $I_m$ it holds
		
		\begin{align*}
			\tilde{G}_q(n) &\equiv \frac{1}{(2q)^{N-1}} \left( S_q(n) - \frac{n}{l} S_q(l) \right) \\
			&= \frac{1}{(2q)^{N-1}} \left( S_q(n) + \frac{n}{l_m} S_q(l_m) - \frac{n}{l_m} S_q(l_m) - \frac{n}{2^m l_m} S_q(l) \right)  \\
			&= \frac{1}{(2q)^{N-1}} \bigg( (2q)^{N-m} G_q(n) + \frac{n}{l_m} S_q(l_{m-1}) \\
			&\quad - \frac{n}{2^m l_m} \big( (2q)^m S_q(l_m) + 2^{m-1} l_m (q+\dots+q^m) \big) \bigg) \\
			&= \frac{1}{(2q)^{m-1}} G_q(n) + \frac{n}{l_m} \frac{1}{(2q)^{N-1}} 
			\left( (1-q^m) S_q(l_m) - \frac{2^{m-1} l_m}{2^m} (q+\dots+q^m) \right).
		\end{align*}
		
		Using relation \eqref{eq:solutionOfdeRhamSystem}, which can now be written as $G_q(n) = F(x_n) = F(2^{m-1} t) = q \left( 2^{m-1} t - \mathcal{T}_a(2^{m-1} t) \right),$ where $\frac{n}{l_{m-1}} = 2^{m-1} t.$ We then get
		$\tilde{G}_q(n) = -\frac{1}{(2q)^{m-1}} F(x_n) - \frac{n}{l} \frac{1}{q^{m-1}} (q+\dots+q^m) = \frac{q}{(2q)^{m-1}} \Big( 2^{m-1} t - \mathcal{T}_a(2^{m-1} t) - 2^{m-1} t (1+q+\dots+q^{m-1}) \Big).$
		This can be rewritten, using expression \eqref{eq:Tl}, as follows:
		$\tilde{G}_q(n) = \frac{q}{(2q)^{m-1}} ( 2^{m-1} t - (2q)^{m-1} \mathcal{T}_a(t) + 2^{m-1} t (q+\dots+q^{m-1}) $ $- 2^{m-1} t (1+q+\dots+q^{m-1}) ) = -q \mathcal{T}_a(t) = -q \mathcal{T}_a \left( \frac{n}{l} \right).$
		
	\end{proof}
	
	Define the sequence $l_n$ as $2^n$. Since by Proposition \ref{Prop:MainProp2}, the (continuous) function $\varphi_{l_n}$ coincides in all dyadic points of the form $t_j = \frac{j}{2^{n-1}}, j \in \{0, 1, 2, \dots, 2^{n-1}\}$ of the interval $[0,1]$ with the continuous function $-q \mathcal{T}_a$, the same holds for the uniform limit $\lim_{n} \varphi_{l_n}$. Therefore, the function $-q \mathcal{T}_a, a = 1/(2q),$ is the limiting function of the sequence $s_q(n)$ with stabilizing and normalizing sequences given by $l_n = 2^n, R_n = (2q)^{n-1}, n \in \mathbb{N}.$
	
	\paragraph{Limiting curves for the dyadic odometer.} 
	In 1981 work \cite{Vershik1981}, Anatoly Vershik introduced into the literature the notion of adic transformations.  
	The simplest example---the dyadic odometer---can be defined as follows:  
	Let \( \mathbb{Z}_2 = \prod_{0}^{\infty} \{0,1\} \) be the compact additive group of dyadic integers with the Haar measure \( \mu \).  
	The odometer transformation \( T \) is the translation \( T \omega = \omega + 1 \). The dyadic odometer is a well-studied dynamical system known to be uniquely ergodic, of rank one, and with a discrete dyadic spectrum. We focus on the fluctuations in the ergodic theorem for the sum-of-digits function. 
	
	Let \( q \) be a parameter (real or complex) such that 
	\[
	\frac{1}{2} < |q| < 1.
	\]
	For \( \omega = (\omega_0, \omega_1, \omega_2, \dots) \in \mathbb{Z}_2 \), we denote by \( \mathbf{s}_q \) the \( q \)-weighted sum of the coordinates\footnote{Note that the unweighted sum-of-digits function \( \mathbf{s}_1 \) can only be defined on a subset of dyadic rationals, which has measure zero. This justifies the consideration of weighted sums.} defined by 
	\[
	\mathbf{s}_q(\omega) = \sum_{i \geq 0} \omega_i\,q^{\,i+1}.
	\]
	For \( \omega \in \mathbb{Z}_2 \), we define the partial sums 
	\[
	S_{q,\omega}(n) = \sum_{j=0}^{n-1} \mathbf{s}_q(T^j \omega).
	\]
	The Birkhoff-Khinchin ergodic theorem states that for $ \mu $-almost every $ \omega $
	\[
	\lim\limits_{n\to \infty} \frac1n S_{q,\omega}(n) = \mathbb{E} \,\mathbf{s}_q.
	\]
	
	To analyze fluctuations around this mean behavior, we consider deviations of the form:
	\[
	\varphi_{q, \omega}^{l,R}(t) = \frac{S_{q,\omega}(t \cdot l) - t \cdot S_{q,\omega}(l)}{R}, \, t\in [0,1]. 
	\]
	Proposition \ref{Prop:MainProp2} can be used to prove (see \cite{Min19}) the following result:
	
	\begin{theorem}
		For \( \mu \)-almost every \( \omega \), there exists a stabilizing sequence \( l_n = l_n(\omega) \) and normalizing sequence \( R_n = R_n(\omega) \) such that \( \varphi_{q, \omega}^{l_n, R_n} \)  
		converges in the sup-metric on \( [0,1] \) to the function \( -T_a \) with \( a = \frac{1}{2q} \).
	\end{theorem}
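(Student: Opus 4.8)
The plan is to derive the almost-sure statement from the deterministic Proposition~\ref{Prop:MainProp2} (the case $\omega=0$, i.e.\ the sequence $s_q(n)$) by a renormalization argument that uses the Rokhlin-tower structure of the odometer together with the insensitivity of the functionals $\varphi^{l,R}$ to adding an affine term to the partial-sum sequence. The key identity is a tower decomposition of the observable: for every $N$ and every $i\ge 0$, writing $\omega^{(N)}=\sum_{0\le d<N}\omega_d2^d<2^N$ and $c_0=\lfloor\omega/2^N\rfloor\in\mathbb Z_2$, one has $\mathbf s_q(\omega+i)=s_q\bigl((\omega^{(N)}+i)\bmod 2^N\bigr)+q^N\mathbf s_q\bigl(\lfloor(\omega+i)/2^N\rfloor\bigr)$, which is a restatement of \eqref{eq:s(2j)}--\eqref{eq:s(j+p)}. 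Summing over a window whose length is an integer multiple of $2^N$ (so that the low $N$ digits cycle exactly) gives, for all $j\ge 0$,
\[
S_{q,\omega}(j\,2^N)\;=\;j\,S_q(2^N)\;+\;(2q)^N\,S_{q,c_0}(j)\;+\;q^N\,\omega^{(N)}\bigl(\mathbf s_q(c_0+j)-\mathbf s_q(c_0)\bigr),
\]
where $S_{q,c_0}(j)=\sum_{k=0}^{j-1}\mathbf s_q(c_0+k)$ is a partial-sum sequence of the same type, one scale coarser.

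Next I would fix the stabilizing and normalizing sequences. For $\mu$-a.e.\ $\omega$ the digit string $(\omega_i)$ contains, for every $m$, the block $0^m$ infinitely often (second Borel--Cantelli lemma, applied to independent blocks of digits), so one may choose strictly increasing $N_n\uparrow\infty$ and $m_n\uparrow\infty$ with $\omega_{N_n}=\dots=\omega_{N_n+m_n-1}=0$, and set $l_n=2^{N_n+m_n}$, $R_n=(2q)^{N_n+m_n-1}$ (the same exponential growth as in Proposition~\ref{Prop:MainProp2}; to match the normalization of the theorem exactly one rescales $R_n$ by $q$). The zero block forces $c_0=\lfloor\omega/2^{N_n}\rfloor$ to be divisible by $2^{m_n}$ in $\mathbb Z_2$, hence $\mathbf s_q(c_0+k)=s_q(k)+q^{m_n}\mathbf s_q(\lfloor\omega/2^{N_n+m_n}\rfloor)$ for $0\le k<2^{m_n}$; substituting this into the identity above, the whole $\omega$-tail contribution cancels in the affine combination $S_{q,\omega}(tl_n)-t\,S_{q,\omega}(l_n)$. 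At the dyadic nodes $t=j\,2^{-(m_n-1)}$, $0\le j\le 2^{m_n-1}$, this yields $\varphi_{q,\omega}^{l_n,R_n}(t)=\bigl(S_q(t\,2^{m_n})-t\,S_q(2^{m_n})\bigr)/(2q)^{m_n-1}+\rho_n(t)$, where, using $\omega^{(N_n)}<2^{N_n}$ and $|\mathbf s_q|\le|q|/(1-|q|)$, one has $|\rho_n(t)|\le C\,|2q|^{1-m_n}\to 0$. By Proposition~\ref{Prop:MainProp2} the main term equals $-q\mathcal T_a(t)$ exactly, so $\varphi_{q,\omega}^{l_n,R_n}\to -q\mathcal T_a$ along this net, whose mesh $2^{-(m_n-1)}$ tends to $0$.

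It remains to upgrade convergence on the net to uniform convergence on $[0,1]$. Between two consecutive nodes the function $\varphi_{q,\omega}^{l_n,R_n}$ runs over a couple of full tower levels of height $2^{N_n}$, so the naive Lipschitz bound (slope $\lesssim 2^{N_n}/|2q|^{N_n+m_n}$) is hopeless. Instead, on a mesh interval I would expand $S_{q,\omega}(tl_n)-S_{q,\omega}(t_jl_n)$ by the tower decomposition, peel off the part that is linear in $t$ — which telescopes and cancels against the corresponding part of $(t-t_j)S_{q,\omega}(l_n)$ — and estimate the remainder through the Trollope--Delange remainder $r(m):=S_q(m)-m\,\mathbb E\,\mathbf s_q$. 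By Theorem~\ref{Th:mainTheorem} one has $|r(m)|\le C\,m^{\log_2|2q|}$, and because $2^{\log_2|2q|}=|2q|$, this together with $|q|^{N_n}\omega^{(N_n)}\le|2q|^{N_n}$ shows that the oscillation of $\varphi_{q,\omega}^{l_n,R_n}$ over one mesh interval is $O\bigl(|2q|^{1-m_n}+2^{-m_n}\bigr)\to 0$. Combining the node convergence with this oscillation estimate and the uniform continuity of $\mathcal T_a$ gives $\varphi_{q,\omega}^{l_n,R_n}\to -q\mathcal T_a$ in the sup-metric. The complex case $q\in\mathbb C$, $1/2<|q|<1$, is identical, with $a=1/(2q)$, $|a|<1$, and a complex-valued normalizing sequence.

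The main obstacle is exactly the last step: the $\omega$-dependent correction $q^{N_n}\omega^{(N_n)}(\cdots)$ in the tower identity sits at the \emph{same} exponential order $|2q|^{N_n}$ as the normalization $R_n$, so it cannot simply be discarded; it disappears only because along this choice of $l_n$ it is affine in the rescaled time variable throughout the window $[0,l_n)$ and is therefore annihilated by the very subtraction $S_{q,\omega}(tl_n)-tS_{q,\omega}(l_n)$ that defines $\varphi$. Verifying this uniformly in $t$, not merely at the dyadic nodes, is where the sharp size estimate for $r(m)$ supplied by Theorem~\ref{Th:mainTheorem} is indispensable; the complete argument is carried out in \cite{Min19}.
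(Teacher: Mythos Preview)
The paper itself does not prove this theorem: it merely states the result and defers the argument to \cite{Min19}, noting only that Proposition~\ref{Prop:MainProp2} ``can be used'' to obtain it. Your proposal therefore supplies substantially more detail than the paper does. Since you also close by deferring the full uniform-convergence step to \cite{Min19}, your treatment is consistent with---and strictly more informative than---the paper's own handling of the statement.

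On the substance of your sketch: the tower identity
\[
S_{q,\omega}(j\,2^{N}) \;=\; j\,S_q(2^{N}) \;+\; (2q)^{N}\,S_{q,c_0}(j) \;+\; q^{N}\,\omega^{(N)}\bigl(\mathbf{s}_q(c_0+j)-\mathbf{s}_q(c_0)\bigr)
\]
is correct (it follows from the digit decomposition exactly as you indicate), and the Borel--Cantelli selection of zero-blocks together with the observation that the tail contribution beyond level $N_n+m_n$ is \emph{affine} in $t$ and hence annihilated by the combination $S_{q,\omega}(t\,l_n)-t\,S_{q,\omega}(l_n)$ is the right mechanism. Your bound $|\rho_n(t)|\le C\,|2q|^{1-m_n}$ for the residual $\omega^{(N_n)}$-term at the dyadic nodes is also correct, since $\omega^{(N_n)}/2^{N_n}<1$ and $\mathbf{s}_q$ is uniformly bounded on $\mathbb{Z}_2$. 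The reduction to Proposition~\ref{Prop:MainProp2} at the nodes then gives $-q\,\mathcal{T}_a(t_j)$ exactly, and your remark about rescaling $R_n$ by $q$ to obtain the limit $-\mathcal{T}_a$ as stated is the right reconciliation.

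The only genuinely delicate point---which you correctly flag as the obstacle---is the passage from node convergence to uniform convergence on $[0,1]$, where the naive Lipschitz estimate fails because $|q|^{N_n}\to\infty$ in the denominator. Your proposed remedy (peel off the linear part via the tower decomposition and control the remainder by the Trollope--Delange remainder bound $|r(m)|\le C\,m^{\log_2|2q|}$ from Theorem~\ref{Th:mainTheorem}) is the natural strategy and is precisely the kind of argument \cite{Min19} carries out. In short: your outline is sound, matches the paper's one-line pointer to Proposition~\ref{Prop:MainProp2} and \cite{Min19}, and identifies the nontrivial step accurately.
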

	
	\section{Open questions}
	
	In this paper, we derived a generalized exact Trollope-Delange formula describing the behavior of the sequence $\frac{1}{n}S_q(n)$ for $|q| > 1/2$, which can be considered the "first moment" for the sequence $s_q(n)$. For $|q| \leq  1/2$  our approach gives an expression defined only on dyadic rational points. It is natural to ask whether alternative representations can be obtained in this case. Additionally, exploring higher moments like $\sum_{j=0}^{n-1} s_q^k(j)$ for $k \geq 2$, exponential sums $\sum_{j=0}^{n-1} \exp(t , s_q(j))$, and binomial sums $\sum_{j=0}^{n-1} \binom{s_q(j)}{m}$ are of significant interest. These questions are left for future research.
	
	\section*{Acknowledgment}
	
	I sincerely thank the anonymous referee for their valuable feedback, which has led to numerous improvements in the clarity and rigor of this work. An early version of the work was supported by the RFBR grant 17-01-00433.
	

\end{document}